\documentclass[12pt, reqno]{amsart}
\usepackage{amssymb}
\usepackage{amsrefs}
\usepackage{amsmath}
  
%
%
%
 \newtheorem{thm}{Theorem}[section]
 
 \newtheorem{lem}[thm]{Lemma}
 
 \theoremstyle{definition}
 
 \theoremstyle{remark}
 \newtheorem{rem}[thm]{Remark}

 \numberwithin{equation}{section}
 \allowdisplaybreaks

\newcommand{\N}{\ensuremath{\mathbb{N}}}

 \newcommand{\Lip}{\mathop{\mathrm{Lip}}\nolimits} 
  \newcommand{\Kal}{\mathop{\mathrm{Kal}}\nolimits} 
   
   \newcommand{\Cont}{\mathop{\mathcal{C}}\nolimits}

 \DeclareMathOperator*{\Ave}{Ave}

\begin{document}

\title[Average optimality in quasi-Banach spaces]
  {Optimal average approximations for functions mapping in quasi-Banach spaces}
\thanks{F. Albiac acknowledges the support of  the Spanish Ministry for Science and  Research Grants  {\it Operators, lattices, and geometry of Banach spaces}, 
reference number MTM 2012- 31286, and  {\it Structure and complexity in Banach spaces II},
reference number MTM2010-20190-C02-02.
}

\author[F. Albiac]{F. Albiac}\address{Mathematics Department\\ Universidad P\'ublica de Navarra\\
 Pamplona\\ 31006 Spain} \email{fernando.albiac@unavarra.es}
 \author[J. L. Ansorena]{J. L. Ansorena}\address{Department of Mathematics and Computer Sciences\\
Universidad de La Rioja\\
 Logro\~no\\
26004 Spain} \email{joseluis.ansorena@unirioja.es}

\subjclass[2000]{46A16; 46G05}

\keywords{Riemann integral,  optimal approximation, quasi-Banach space, fundamental theorem of calculus}

\begin{abstract} 
 In 1994, M.M. Popov \cite{Popov1994} showed that the fundamental theorem of calculus fails, in general, for functions mapping from a compact interval of the real line into the $\ell_p$-spaces for $0<p<1$ , and the question arose whether such a significant result  might hold in some non-locally convex spaces. In this article we completely settle the problem by proving that the fundamental theorem of calculus breaks down in the context of {\it any} non-locally convex quasi-Banach space.  Our approach introduces  the tool of Riemann-integral averages of continuous functions, and uses it to bring out to light  the  differences in behavior of their approximates in the lack of local convexity. As a by-product of our work we solve a problem raised in \cite{AlbiacAnsorenaNA} on the different types of spaces of differentiable functions with values on a quasi-Banach space.\end{abstract}

\maketitle

\section{Introduction and background}\label{intro}

Continuous maps  from a compact interval of the real line into a Banach space $X$ are Riemann-integrable. For each $f:[a,b]\to X$, the optimal behavior  of  the averages
\[
\frac{1}{t-s}\int_{s}^{t}f(u)\,du,\qquad a\le s<t\le b,
\]
in approximating $f$ locally in norm is substantiated by the fact that,  thanks to  the convexity of the space, for every point $c\in [a,b]$ we have
\[
\left\Vert\frac{1}{t-s}\int_{s}^{t}f(u)\,du-f(c)\right\Vert=\left\Vert \int_{s}^{t}
\frac{f(u)-f(c)} {t-s} \,du\right\Vert\le \max\limits_{u\in[s,t]}\Vert f(u)-f(c)\Vert,
\]
so that
\[
\lim\limits_{(s,t)\to (c,c)}\frac{1}{t-s}\int_{s}^{t}f(u)\,du=f(c).
\]
 In other words,
 the formula
\begin{equation}\label{Aveformula}
\Ave[f](s,t)=\begin{cases}{\displaystyle \frac{1}{t-s}\int_{s}^{t}f(u)\,du}, &\text{if}\;\; a\le s<t\le b,\\
f(c), &\mbox{if}\;\;a\le s=t=c\le b,\\
{\displaystyle \frac{1}{s-t}\int_{t}^{s}f(u)\,du}, &\text{if}\;\; a\le t<s\le b,
\end{cases}
\end{equation}
defines a {\it jointly continuous} function from $[a,b]\times[a,b]$ into $X$. In particular, $\Ave[f]$ is \textit{bounded}, i.e.,
\[
\sup_{a\le s,t\le b} \Vert\Ave[f](s,t)\Vert<\infty,
\]
and \textit{separately continuous}, i.e., for fixed $s_0$ and $t_0$ in $[a,b]$ we have
\[
\lim\limits_{s\to s_0}\Ave[f](s,t_0)=\lim\limits_{t\to t_0}\Ave[f](s_0,t)=\Ave[f](s_0,t_0).
\]

The study of  averages of continuous functions mapping into quasi-Banach spaces faces obstructions from the very beginning. Indeed, if $X$ is non-locally convex,  by an old result of Mazur and Orlicz \cite{MazurOrlicz1948} there exist   continuous $X$-valued functions  failing to be Riemann-integrable. Thus, to extend the problem we suppose that $f:[a,b]\to X$ is continuous and integrable, and wonder whether, with this extra assumption,  the average function defined in \eqref{Aveformula} will retain  the  optimality   it enjoys for Banach spaces.  
The answer to this question  is negative, as the authors showed in \cite{AlbiacAnsorenaNA}. 

\begin{thm}[{\cite{AlbiacAnsorenaNA}*{Theorem 1.1}}]\label{AANA1} Suppose $0<p<1$. Then there exists a continuous Riemann-integrable function $f:[0,1]\to \ell_{p}$ whose averages 
are bounded but  fail to be optimal, in the sense that
\[
\lim\limits_{s\to 1^{-}}\Ave[f](s,1)\not= \Ave[f](1,1). 
\]
\end{thm}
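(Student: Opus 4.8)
The plan is to construct an explicit continuous, Riemann-integrable function $f:[0,1]\to\ell_p$ whose averages over intervals $(s,1)$ fail to converge to $f(1)$ as $s\to 1^-$. The natural strategy is to work ``near the endpoint $1$'' and to place, on a sequence of disjoint subintervals $I_n$ shrinking to $1$, bump functions supported on $I_n$ and taking values in the direction of the $n$-th coordinate vector $e_n$ of $\ell_p$. Concretely, I would choose a decreasing sequence $1=c_0>c_1>c_2>\cdots\to 1/2$ (say), let $I_n=[c_n,c_{n-1}]$ of length $\ell_n=c_{n-1}-c_n$, pick a scalar ``tent'' or smooth bump $\varphi_n$ supported on $I_n$ with $0\le\varphi_n\le 1$ and $\varphi_n$ equal to $1$ on a large middle portion of $I_n$, and set
\[
f(u)=\sum_{n=1}^{\infty} a_n\,\varphi_n(u)\,e_n,\qquad f(1)=0,
\]
for a suitable positive scalar sequence $(a_n)$. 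Since the $\varphi_n$ have disjoint supports, at each point $f(u)$ is (at most) a single scaled basis vector, so continuity on $[0,1)$ is automatic, and continuity at $1$ holds provided $a_n\to 0$. Riemann-integrability on each $I_n$ is clear (the restriction is essentially scalar-valued), and global Riemann-integrability will follow by a standard tail estimate once $\sum_n a_n^p\,\ell_n<\infty$: the upper and lower Darboux-type sums over a partition refining the $c_n$ differ by a quantity controlled by $\bigl(\sum_{n\ge N} a_n^p\ell_n\bigr)^{1/p}\to0$.

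The crux is the computation of $\Ave[f](c_N,1)$ and the two competing requirements on $(a_n)$ and $(\ell_n)$. On the one hand,
\[
\Ave[f](c_N,1)=\frac{1}{1-c_N}\int_{c_N}^{1} f(u)\,du
=\frac{1}{1-c_N}\sum_{n=1}^{N}\Bigl(a_n\!\int_{I_n}\!\varphi_n\Bigr)e_n,
\]
and because $1-c_N=\sum_{n\le N}\ell_n$ and $\int_{I_n}\varphi_n\asymp \ell_n$, the $\ell_p$-quasinorm of this vector is
\[
\Bigl\Vert\Ave[f](c_N,1)\Bigr\Vert_p^p\;\asymp\;\frac{\sum_{n=1}^{N} a_n^p\,\ell_n^p}{\bigl(\sum_{n=1}^{N}\ell_n\bigr)^{p}}.
\]
I want this to stay bounded away from $0$ (indeed to converge to a positive number, or at least not to $0$) as $N\to\infty$, which forces $\sum_n a_n^p\ell_n^p$ to be of the same order as $\bigl(\sum_n\ell_n\bigr)^p$ — this is exactly where $p<1$ is exploited, since $\sum\ell_n^p$ can be made comparable to, or much larger than, $(\sum\ell_n)^p$ even while $\sum\ell_n$ converges. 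On the other hand I need \emph{boundedness} of all the averages $\Ave[f](s,t)$ (as required by the statement) and $a_n\to0$ for continuity at $1$ and $\sum a_n^p\ell_n<\infty$ for integrability. A clean way to satisfy everything is to take $\ell_n$ geometrically small, e.g. $\ell_n = \lambda^n$ with $0<\lambda<1$, and $a_n$ tending to $0$ slowly (polynomially in $n$); then $\sum\ell_n\asymp\lambda^N$ at level $N$, $\sum_{n\le N} a_n^p\ell_n^p\asymp a_N^p\lambda^{pN}$ up to constants, so the ratio above is $\asymp a_N^p$, which tends to $0$ — so that naive choice is the \emph{wrong} one. The fix is to let the interval lengths \emph{not} decay geometrically but rather let the bulk of $1-c_N$ come from a single interval: e.g. arrange $\ell_n$ so that $\ell_n \gg \sum_{k>n}\ell_k$ fails in the reverse direction — take $c_n = 1/2 + 2^{-n}$, i.e. $\ell_n = 2^{-n}$, and instead boost $a_n$ so that $a_n^p 2^{-pn}$ is roughly constant in $n$, i.e. $a_n \asymp 2^{n(1-1/p)}\cdot(\text{slowly growing})^{-1}$; since $p<1$ we have $1-1/p<0$, so $a_n\to0$, giving continuity, while $\sum_n a_n^p 2^{-n} \asymp \sum_n 2^{-n(2-p)}<\infty$ gives integrability, and the ratio $\bigl(\sum_{n\le N} a_n^p 2^{-pn}\bigr)/\bigl(\sum_{n\le N}2^{-n}\bigr)^p \asymp N \cdot (\text{const})$ — which now \emph{diverges}; to get a bounded-but-nonconvergent behavior I instead tune $a_n$ so that $a_n^p 2^{-pn}$ is summable with partial sums $\to L>0$ while still being $\Theta$ of $(\sum_{n\le N}2^{-n})^p=\Theta(2^{-pN})$, which pins down $a_n \asymp 2^{n(1-1/p)}$ up to a sequence $b_n$ with $\sum b_n^p$ convergent and $b_n\not\to0$-controlled appropriately; the precise bookkeeping of these constants is the routine-but-delicate part I would relegate to a lemma.

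The main obstacle, then, is not the continuity or integrability (both routine once the support structure is fixed) but the simultaneous calibration of $(\ell_n)$ and $(a_n)$ so that (i) $a_n\to 0$, (ii) $\sum_n a_n^p \ell_n<\infty$, (iii) $\sup_{s<t}\Vert\Ave[f](s,t)\Vert_p<\infty$, and (iv) $\Vert\Ave[f](c_N,1)\Vert_p\not\to 0$. The key conceptual point making (iii) and (iv) compatible with (i),(ii) is the failure of the triangle inequality for $\Vert\cdot\Vert_p$ when $p<1$: a Riemann-integral average is a convex combination of values of $f$, but in $\ell_p$ a convex combination of small, ``spread-out'' vectors $a_n e_n$ can have quasinorm comparable to $\max_n a_n$ rather than decaying, because $\Vert\sum \theta_n a_n e_n\Vert_p^p = \sum\theta_n^p a_n^p$ and $\theta_n^p \gg \theta_n$. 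For boundedness (iii) over an \emph{arbitrary} subinterval $(s,t)$, I would note that the worst case is $(s,t)$ with $s$ to the left of all relevant bumps and $t=1$, reducing to the already-analyzed family; intervals not abutting $1$, or short intervals inside a single $I_n$, give averages bounded by $\max_n a_n$ directly. Once the sequences are chosen, the verification that $\lim_{s\to1^-}\Ave[f](s,1)$ either fails to exist or differs from $0=f(1)=\Ave[f](1,1)$ is immediate from the lower bound on $\Vert\Ave[f](c_N,1)\Vert_p$. I would present the construction with a concrete choice (e.g. $\ell_n = 2^{-n}$, $a_n = n^{-2/p}\,2^{n(1/p-1)}$ — note this gives $a_n\to\infty$, which is \emph{not} allowed, signalling that one must instead accept $f$ unbounded-valued is forbidden, so the correct concrete choice has $a_n\to0$ and one sacrifices a clean closed form), isolating the arithmetic in a short lemma about the sequence $S_N=\sum_{n\le N}a_n^p\ell_n^p\big/\bigl(\sum_{n\le N}\ell_n\bigr)^p$.
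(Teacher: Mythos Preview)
Your general strategy---piecewise bump functions supported on disjoint intervals and pointing in distinct coordinate directions of $\ell_p$---is exactly the right idea and is the approach used in the cited paper (and codified here as Lemma~\ref{en1}). However, the proposal does not yet constitute a proof, for two reasons.

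\textbf{The intervals accumulate at the wrong endpoint.} You take $1=c_0>c_1>\cdots\to 1/2$, so $I_1=[c_1,1]$ is the rightmost interval and the $I_n$ accumulate at $1/2$. Consequently $c_N\to 1/2$, and your computation of $\Ave[f](c_N,1)$ is analyzing the limit $s\to (1/2)^+$, not $s\to 1^-$. With your setup, for $s$ close to $1$ the point $s$ lies in $I_1$, and $\Ave[f](s,1)$ is a one-dimensional average that converges to $a_1\varphi_1(1)e_1=0=f(1)$ by the scalar fundamental theorem of calculus---so the construction, as written, actually \emph{satisfies} the property you want to violate. The paper's framework (Lemma~\ref{en1}) takes $0=t_0<t_1<\cdots\to 1$, so that $\Ave[f](t_{N-1},1)$ involves the tail $\sum_{k\ge N}\lambda_k x_k$ and $t_{N-1}\to 1^-$; this is the orientation you need.

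\textbf{No working choice of parameters is reached.} Even after correcting the direction, the proposal tries several choices of $(\ell_n,a_n)$, finds (correctly) that each fails one of the constraints, and ends without a valid pair. The difficulty is real: with geometric decay $\ell_n=2^{-n}$, the tail $\sum_{k\ge N}\ell_k\asymp\ell_N$ is dominated by a single term, forcing $a_N\asymp 1$ if the tail ratio is to stay bounded away from zero---incompatible with $a_N\to 0$. The resolution is to use \emph{polynomial} decay: for instance $\lambda_k\asymp k^{-2}$ and $a_k=k^{-(1/p-1)}$. Then $a_k\to 0$, $\sum_k\lambda_k^p a_k^p=\sum_k k^{-p-1}<\infty$ gives integrability, and one checks directly that
\[
\frac{\bigl(\sum_{k\ge N}\lambda_k^p a_k^p\bigr)^{1/p}}{\sum_{k\ge N}\lambda_k}\asymp\frac{N^{-1}}{N^{-1}}\asymp 1,
\]
while the ratio over an arbitrary block $m\le k\le n$ stays bounded (this is the content of Lemma~\ref{en1}(iii) specialized to $\ell_p$). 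Your claim that ``the worst case is $(s,t)$ abutting $1$'' for boundedness is plausible but not justified; in the paper's framework the full condition~\eqref{condition3} must be checked, and it does require a short argument for general $m,n$.
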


Delving deeper into the subject leads to  another less expected ``pathology", namely that the averages $\Ave[f](s,t)$  need not be bounded even in the case when $\Ave[f]$ is separately continuous. 

\begin{thm}[cf.\ {\cite{AlbiacAnsorenaNA}*{Theorem 4.1}}]\label{AANA2} Let $X$ be a non-locally convex quasi-Banach space. Then there exists a continuous Riemann-integrable function $f:[0,1]\to X$ whose average function $\Ave[f]$ is separately continuous  and yet
$
\sup_{0\le s,t\le 1} \Vert\Ave[f](s,t)\Vert=\infty.
$
\end{thm}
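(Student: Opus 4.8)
The plan is to produce $f$ in the form $f=\sum_{m\ge 1}f_{m}$, where each $f_{m}$ is a continuous, piecewise-linear ``staircase bump'' supported on a short interval $I_{m}=[a_{m},b_{m}]\subset(0,1)$, the $I_{m}$ pairwise disjoint with $a_{m}\searrow 0$ (take $a_{m}=1/(m+1)$, say), and $f_{m}\equiv 0$ off $I_{m}$. Non-local convexity is used only to feed the bumps: since $X$ is not locally convex, the convex hull of $B_{X}$ is unbounded (otherwise its Minkowski functional would be an equivalent norm), so, approximating convex coefficients by rationals of a common denominator, for each $n\in\N$ there are $k_{n}\in\N$ and $y_{n,1},\dots,y_{n,k_{n}}\in B_{X}$ with $\bigl\Vert\sum_{j=1}^{k_{n}}y_{n,j}\bigr\Vert\ge n\,k_{n}$. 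Fixing (Aoki--Rolewicz) an equivalent $p$-norm on $X$ with $0<p<1$, I would choose heights $h_{m}\searrow 0$ with $\sum_{m}h_{m}^{p}<\infty$ and integers $n_{m}\nearrow\infty$ with $h_{m}n_{m}\to\infty$, and let $f_{m}$ be the map on $I_{m}$, vanishing at its endpoints, that runs through the constant values $h_{m}y_{n_{m},1},\dots,h_{m}y_{n_{m},k_{m}}$ on $k_{m}$ equal flat sub-intervals joined by short linear transitions. The lengths $|I_{m}|$ and the transition widths are left free, to be fixed only at the end as extremely small quantities depending on $h_{m}$, $k_{m}$, and the \emph{finite} constant $\theta_{m}$ for which $|x|_{m}\le\Vert x\Vert\le\theta_{m}|x|_{m}$ on the finite-dimensional $X_{m}:=\operatorname{span}\{y_{n_{m},j}\}_{j}$, where $|\cdot|_{m}$ is a norm equivalent to $\Vert\cdot\Vert$ on $X_{m}$.

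Granting that this is arranged so that $f$ is Riemann integrable, the three assertions are routine. Continuity of $f$ is clear: $f$ is continuous on each $I_{m}$, vanishes at $0$ and on the gaps, and $\Vert f\Vert\le C h_{m}$ on $I_{m}$ with $h_{m}\to 0$. For separate continuity of $\Ave[f]$: off the diagonal it follows from continuity of $x\mapsto\int_{0}^{x}f$; on the diagonal it is equivalent to $\bigl(\int_{0}^{x}f\bigr)'=f(x)$ for all $x$, which inside each (finite-dimensional) bump is the classical fundamental theorem of calculus, on the gaps is trivial, and at $x=0$ follows from the decay bound $\Vert\int_{0}^{h}f\Vert\le\bigl(\sum_{k>m}\rho_{k}^{p}\bigr)^{1/p}\le 2^{-m/p}$ for $h$ just past the $m$-th bump, against $h\gtrsim 1/m$, so that $\int_{0}^{h}f=o(h)$; here $\rho_{k}$ is the bound on $\Vert\int_{I_{k}}f_{k}\Vert$ described below. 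Finally, $\Ave[f](a_{m},b_{m})=|I_{m}|^{-1}\int_{I_{m}}f_{m}$, and the ``main part'' $h_{m}\ell_{m}\sum_{j}y_{n_{m},j}$ of $\int_{I_{m}}f_{m}$ (with $\ell_{m}$ the flat width, $k_{m}\ell_{m}\approx|I_{m}|$) has norm $\gtrsim h_{m}n_{m}|I_{m}|$; taking the transition widths small enough that this part dominates the transition corrections in the $p$-norm gives $\Vert\Ave[f](a_{m},b_{m})\Vert\gtrsim h_{m}n_{m}\to\infty$, whence $\sup\Vert\Ave[f]\Vert=\infty$.

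The one genuinely delicate step — the reason this is harder than the Banach case — is the Riemann integrability of $f$: by Mazur--Orlicz continuity alone does not guarantee it, and here $f$ oscillates without bound as $x\to 0$, the $m$-th bump carrying $k_{m}\to\infty$ oscillations. The device that resolves this is that $f_{m}$ takes values in the finite-dimensional $X_{m}$, where $\Vert\cdot\Vert\sim|\cdot|_{m}$ with the finite (though possibly enormous) constant $\theta_{m}$; no estimate on $\theta_{m}$ is needed, only finiteness. It follows that any Riemann sum of $f_{m}$ over cells contained in $I_{m}$ has norm at most $\rho_{m}:=\theta_{m}C h_{m}|I_{m}|$, while the at most two cells straddling $\partial I_{m}$ contribute at most $2^{1/p}C h_{m}\eta$; hence, for every tagged partition $P$ of $[0,1]$ of mesh $\eta$,
\[
\bigl\Vert S(f_{m},P)\bigr\Vert^{p}\le\rho_{m}^{p}+2C^{p}h_{m}^{p}\eta^{p}.
\]
One then fixes the $|I_{m}|$ so small that $\sum_{m}\rho_{m}^{p}<\infty$ (keeping $\sum_{m}h_{m}^{p}<\infty$). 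For the Cauchy criterion, split $f=g_{M}+r_{M}$ with $g_{M}=\sum_{m\le M}f_{m}$ and $r_{M}=\sum_{m>M}f_{m}$; grouping a Riemann sum of $r_{M}$ by which $I_{m}$ contains its tag, the $p$-triangle inequality yields
\[
\bigl\Vert S(r_{M},P)\bigr\Vert^{p}\le\sum_{m>M}\bigl\Vert S(f_{m},P)\bigr\Vert^{p}\le\sum_{m>M}\rho_{m}^{p}+2C^{p}\eta^{p}\sum_{m\ge 1}h_{m}^{p},
\]
which is as small as desired once $M$ is large and $\eta$ small, while $g_{M}$, a finite sum of continuous finite-dimensional functions, has Cauchy Riemann sums as $\eta\to 0$. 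Therefore the Riemann sums of $f$ are Cauchy, $f$ is Riemann integrable with $\int_{0}^{1}f=\sum_{m}\int_{I_{m}}f_{m}$ (and likewise over every $[0,x]$), and the previous paragraph finishes the proof.
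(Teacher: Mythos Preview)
Your construction is correct and yields the stated theorem, but it proceeds along a different route from the paper's framework. The paper does not prove Theorem~\ref{AANA2} directly (it is cited from earlier work), but the machinery of Section~\ref{tayloring} would handle it by choosing sequences $\mathbf{a}=(A_q)$ and $\mathbf{b}=(\beta_q)$ so that $A_q\to 0$, $A_q C_q\to\infty$, $\sum_q q^{1-p}\beta_q^{\,p}<\infty$, and $A_q C_q\beta_q\big/\sum_{r\ge q}\beta_r\to 0$, and then invoking Lemma~\ref{en2}. That approach uses one-dimensional ``tent'' bumps on intervals that partition $[0,1)$ and accumulate at $1$, together with a built-in $\pm$ cancellation (the index $\varepsilon$) that makes block sums vanish; separate continuity then reduces to a single ratio condition, and Riemann integrability to $\sum_k\lambda_k^{\,p}<\infty$.

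Your route is more hands-on: multi-step staircase bumps accumulating at $0$, each bump valued in its own finite-dimensional subspace $X_m$ with distortion constant $\theta_m$, and Riemann integrability secured by shrinking $|I_m|$ at the very end to absorb the (uncontrolled but finite) $\theta_m$. This sidesteps the double indexing and cancellation structure entirely, at the price of leaving $|I_m|$ as a free parameter. Your verification of separate continuity is also different in flavour: it reduces to the classical fundamental theorem of calculus on each finite-dimensional bump plus a tail decay estimate at the accumulation point $0$, rather than to the ratio criterion of Lemma~\ref{en2}(iv). The paper's scheme is more modular---one lemma dispatches Theorems~\ref{AANA2}--\ref{theoremnotboundednotseparately} by adjusting two scalar sequences---whereas your argument is more self-contained for this single statement and makes the role of finite-dimensionality in forcing integrability especially transparent.
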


Hence, in particular this theorem yields   the existence of  continuous Riemann-integrable functions mapping into non-locally convex spaces whose averages   are separately continuous  but
 not  jointly continuous. 
 
Of course,  by a straightforward scaling argument, Theorem~\ref{AANA1} and Theorem~\ref{AANA2} can be proved for any compact interval $[a,b]$ instead of  $[0,1]$.  That being said,  for simplicity  we choose to work on the unit interval of the real line and from now on we will denote   $[0,1]$ by $I$.
 
In view of the previous results one may ask whether the optimality of $\Ave[f]$ in approximating $f$ will follow when the function $\Ave[f]$  is  simultaneously bounded and separately continuous. Equipped with the machinery we will introduce in Section 2,  in Section 3 we provide a negative answer to this question by proving the following theorem:

\begin{thm}\label{theoremboundedseparatelynotjointly} Let $X$ be a non-locally convex quasi-Banach space. Then there exists $f:I\to X$ continuous and Riemann-integrable such that $\Ave[f]$ is bounded and separately continuous but fails to be jointly continuous. 
\end{thm}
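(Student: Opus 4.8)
The plan is to produce $f$ as an infinite concatenation of rescaled copies of a single ``bad block'', the blocks being placed on a sequence of pairwise disjoint intervals $J_k\subset I$ that shrink down to $0$, separated by gaps on which $f$ vanishes; the point $(0,0)$ will be the sole point where $\Ave[f]$ fails to be jointly continuous. Since $X$ is not locally convex, by the Aoki--Rolewicz theorem I may assume its quasi-norm is $r$-subadditive for some $0<r<1$, and then, local convexity failing, the finite averages $\nu(m):=\sup\{\Vert m^{-1}(y_1+\cdots+y_m)\Vert:\Vert y_i\Vert\le1\}$ are unbounded in $m$. Fixing $\delta>0$, for each $k$ I would pick, using this unboundedness, unit-ball vectors $y^k_1,\dots,y^k_{n_k}$ with $\Vert y^k_1+\cdots+y^k_{n_k}\Vert\ge n_kN_k$ for some $N_k\to\infty$, and set $x^k_i:=y^k_i$, $x^k_{n_k+i}:=-y^k_i$ ($1\le i\le n_k$), so that with $m_k:=2n_k$ one has $\sum_{i=1}^{m_k}x^k_i=0$ while $\Lambda_k:=\max_{1\le a\le b\le m_k}(b-a+1)^{-1}\Vert x^k_a+\cdots+x^k_b\Vert$ satisfies $N_k\le\Lambda_k\le m_k^{1/r-1}$; in particular $\Lambda_k\to\infty$, and I set $\varepsilon_k:=2\delta/\Lambda_k\to0$.

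The $k$-th bad block is the function on a reference interval, partitioned into $m_k$ equal pieces, given by $\sum_i\varepsilon_k\,\tau_i\,x^k_i$ where $\tau_i$ is the unit-height triangular bump on the $i$-th piece. It is continuous, vanishes at every subdivision point, has quasi-norm at most $\varepsilon_k$, integrates to $0$ over the whole reference interval (because $\sum_ix^k_i=0$), and---this is where the lack of local convexity is exploited---its sub-interval averages satisfy $\sup_{K}|K|^{-1}\Vert\int_K(\mathrm{block})\Vert=\tfrac12\varepsilon_k\Lambda_k+O(\varepsilon_k)\in[\delta,2\delta]$, the supremum being attained on a union $K_k$ of consecutive pieces. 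I then let $f$ agree on $J_k$ (of length $\ell_k$) with this block rescaled, put $f=0$ off $\bigcup_kJ_k$ and at $0$, and note that $f$ is continuous on $I$: away from $0$ only finitely many blocks are present, and near $0$ one has $\Vert f\Vert\le\varepsilon_{k}\to0$.

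Three facts then have to be checked. For \emph{boundedness} of $\Ave[f]$: when $[s,t]$ meets several blocks, each block it contains entirely integrates to $0$, so $\int_s^tf$ collapses to the two partial integrals over the first and last blocks met, and $r$-subadditivity gives $\Vert\Ave[f](s,t)\Vert\le2^{1/r}\sup_kA_k\le 2^{1/r+1}\delta$, where $A_k=\sup_{K\subseteq J_k}|K|^{-1}\Vert\int_Kf\Vert\in[\delta,2\delta]$. For \emph{separate continuity}: the only slice that could fail is the one through $0$, and there $\Ave[f](0,t)=t^{-1}\int_0^tf$ reduces (later blocks integrating to $0$) to $t^{-1}\int_{[\alpha_{k(t)},t]}(\mathrm{block}\ k(t))$, so $\Vert\Ave[f](0,t)\Vert\le(\sup_kA_k)\,\ell_{k(t)}/\mathrm{dist}(J_{k(t)},0)$, which tends to $0$ as $t\to0^+$ \emph{provided} the $\ell_k$ are chosen negligible relative to the gaps that separate $J_k$ from $0$ (a dilution that also makes the other slices, and the diagonal points lying inside a block or in a gap, harmless, since there $f$ is, locally, a finite sum of scalar tents times vectors, so $\Ave[f]$ is jointly continuous by the scalar fundamental theorem). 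For \emph{failure of joint continuity at $(0,0)$}: on $[s_k,t_k]:=K_k$ one has $\Vert\Ave[f](s_k,t_k)\Vert\ge\delta$, whereas $(s_k,t_k)\to(0,0)$ and $\Ave[f](0,0)=f(0)=0$.

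The principal obstacle is the remaining requirement that $f$ be \emph{Riemann-integrable}, since in the absence of local convexity continuity does not guarantee this and it must be extracted from the construction. Here the cancellation $\sum_ix^k_i=0$ is again decisive: it forces the $k$-th block's contribution to a Riemann sum $S(f,P)$ of mesh $\mu$ to have quasi-norm $\lesssim\varepsilon_km_k^{1/r}\min(\mu,\ell_k/m_k)$, whence $\Vert S(f,P)\Vert^r\lesssim\sum_k\varepsilon_k^rm_k\min(\mu,\ell_k/m_k)^r$. Since the only constraints imposed so far on the $\ell_k$ are summability, monotone decrease, and smallness relative to the gaps, I may in addition shrink each $\ell_k$ (once $m_k$, $\varepsilon_k$ and the gaps are fixed) so aggressively that this series is finite and tends to $0$ with $\mu$, yielding Riemann-integrability with $\int_0^1f=0$. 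In short, the crux is to reconcile the competing demands---amplitudes decaying (continuity), block integrals vanishing and blocks diluted by the gaps (boundedness and separate continuity), block lengths tiny (Riemann-integrability), yet sub-interval averages staying $\ge\delta$ (failure of joint continuity)---and it is for manufacturing the bad blocks with exactly this quantitative profile, and for the estimates underlying these reductions, that the machinery of Section~2 is designed; with it in hand, the assembly above is routine.
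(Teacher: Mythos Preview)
Your construction is essentially correct, and the underlying idea---piecewise-linear ``tent'' blocks whose sub-averages witness non-convexity, arranged so that full blocks cancel---is precisely the mechanism the paper uses. The implementations differ, however. The paper does \emph{not} introduce gaps: it invokes Lemma~\ref{en2} with the explicit choices $A_q=1/C_q$ and $\beta_q=\tfrac12\bigl(q^{-b}-(q{+}1)^{-b}\bigr)$ for $b>2(1-p)/p$, so that the intervals tile $[0,1)$ and the bad point is $(1,1)$. Boundedness and failure of joint continuity follow instantly from $A_qC_q\equiv1$ via parts (iii) and (v); separate continuity comes from part (iv), since $\beta_q/\sum_{r\ge q}\beta_r\approx q^{-1}\to0$; and Riemann-integrability is the simple series test $\sum_q q^{1-p}\beta_q^p<\infty$ of part (ii). Your route trades this arithmetic for geometry: dilution by gaps replaces the polynomial decay of $\beta_q$ as the mechanism forcing separate continuity, and aggressive shrinking of $\ell_k$ replaces the series criterion for integrability. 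Both work; the paper's version is shorter because Lemma~\ref{en2} has already packaged the needed equivalences, while yours re-derives them in situ.

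Two technical points deserve care if you pursue your variant. First, your Riemann-integrability estimate $\Vert S_k\Vert\lesssim\varepsilon_k m_k^{1/r}\min(\mu,\ell_k/m_k)$ is delicate in the regime $\mu\gtrsim\ell_k/m_k$, where the partition does not resolve individual tents and the cancellation $\sum_i x^k_i=0$ is not directly usable; the cleanest fix is to note that your construction (gaps included, with $x_n=0$ on gap intervals) is actually an instance of Lemma~\ref{en1}, so integrability reduces to $\sum_k m_k^{1-r}\ell_k^r<\infty$, which your freedom in $\ell_k$ guarantees. Second, you use equal-weight averages $\nu(m)$ rather than the paper's $C_q$ (which allows arbitrary convex weights); both are unbounded exactly when $X$ is not locally convex, so this is harmless, but it means your $\Lambda_k$ is controlled by $\nu$ rather than $C$.
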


We will also be able to extend  Theorem~\ref{AANA1} by replacing the space $\ell_p$ for $p<1$ with any non-locally convex quasi-Banach space. That is,

\begin{thm}\label{theoremboundednotseparately} 
 Let $X$ be a non-locally convex quasi-Banach space. Then there exists $f:I\to X$ continuous and Riemann-integrable such that $\Ave[f]$ is bounded  but fails to be separately continuous. 
 \end{thm}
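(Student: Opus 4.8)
The plan is to build on the scheme already used for Theorem~\ref{AANA1}, replacing the concrete space $\ell_p$ with an arbitrary non-locally convex quasi-Banach space $X$. The starting point is the Aoki--Rolewicz theorem together with the fact that non-local convexity is quantitative: for every $n$ there exist a finite set of vectors $x_1,\dots,x_{k_n}\in X$ with $\|x_i\|\le 1$ whose average $\frac1{k_n}\sum_{i=1}^{k_n}x_i$ has norm at least $\lambda_n$, where $\lambda_n\to\infty$ (if no such blow-up were possible the quasi-norm would be equivalent to a norm). These ``bad averages'' are the raw material. The goal is to glue rescaled copies of them along a sequence of shrinking subintervals of $I$ accumulating at a single point $c$, arranging the continuous function $f$ so that on each block the Riemann-integral average over a suitable pair $(s_n,t_n)$ reproduces (a rescaled version of) the bad average $\frac1{k_n}\sum x_i$, while the value $f(c)$ and the value of $\Ave[f]$ at the limit point are controlled.

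The construction I would carry out in steps: first, fix a summable sequence of ``heights'' $\varepsilon_n>0$ chosen so that $\sum_n \varepsilon_n\|x\|$-type tails are small but $\varepsilon_n\lambda_n\not\to 0$ (this is where the divergence of $\lambda_n$ is spent). Second, on the $n$-th subinterval $J_n$, define $f$ to be a continuous ``staircase-with-spikes'' function that successively takes (approximately) the values $\varepsilon_n x_1,\dots,\varepsilon_n x_{k_n}$ on $k_n$ equal subintervals, with short continuous transitions between them and with $f$ returning to (near) $0$ at the endpoints of $J_n$ so the blocks can be concatenated continuously and so that $f$ extends continuously by $f(c)=0$. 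Third, check Riemann-integrability of $f$: each block, being a continuous finite-valued staircase up to small transitions, contributes a small Riemann sum oscillation, and the blocks shrink, so the global upper and lower sums agree — this uses that a countable concatenation of integrable blocks with summable ``total variation of oscillation'' is integrable, exactly as in \cite{AlbiacAnsorenaNA}. Fourth, estimate $\Ave[f]$: for the pair $(s_n,t_n)$ equal to the endpoints of $J_n$ one gets $\Ave[f](s_n,t_n)\approx \varepsilon_n\cdot\frac1{k_n}\sum x_i$ plus a controlled error, hence $\|\Ave[f](s_n,t_n)\|\gtrsim \varepsilon_n\lambda_n$; choosing the parameters so that this stays bounded away from $0$ while $(s_n,t_n)\to(c,c)$ and $\Ave[f](c,c)=f(c)=0$ yields the failure of separate continuity at $(c,c)$ (in the variable along which $t_n\to c$ with $s_n$ fixed, or by a diagonal adjustment). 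Fifth, verify that $\Ave[f]$ is nonetheless bounded: on any interval $(s,t)$ meeting only finitely many blocks the average is an honest finite combination and is controlled by the sup of $\|f\|$ and the quasi-triangle-inequality constant; on intervals meeting infinitely many blocks near $c$ one uses that the $\varepsilon_n$-weighted tails are summable, so the integral $\int_s^t f$ has norm $O(|t-s|)$ uniformly — this is the step that forces the quantitative choice of $\varepsilon_n$ relative to the Aoki--Rolewicz constant of $X$.

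The main obstacle I anticipate is the boundedness of $\Ave[f]$: making a single bad average large is easy, but one must ensure that \emph{no} averaging window, in particular windows straddling the junctions between blocks or containing a whole tail of blocks, produces an unbounded value. Controlling those requires a careful budget: the ``bad direction'' of block $n$ must be paid for by $\varepsilon_n$ small enough that $\sum_{m\ge n}\varepsilon_m(\text{length and count data of }J_m)$ is $O(\mathrm{length of the window})$ in quasi-norm, using that in a $p$-normable space $\|\sum y_i\|^p\le\sum\|y_i\|^p$. Balancing ``$\varepsilon_n\lambda_n$ bounded below'' against ``$\varepsilon_n$ summable enough for boundedness of all windows'' is the real content; it is feasible precisely because $\lambda_n\to\infty$ can be made to grow as slowly as we like by subdividing, so we have room to also impose strong summability on $\varepsilon_n$. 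Once the parameters are pinned down, integrability and the failure of separate continuity are comparatively routine, following the template of Theorems~\ref{AANA1} and~\ref{AANA2}.
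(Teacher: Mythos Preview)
Your outline has a genuine gap in step four: what you actually establish there is failure of \emph{joint} continuity, not failure of \emph{separate} continuity. Taking $(s_n,t_n)$ to be the two endpoints of the block $J_n$ lets \emph{both} coordinates move toward $c$, and the parenthetical ``in the variable along which $t_n\to c$ with $s_n$ fixed, or by a diagonal adjustment'' does not repair this --- if $s_n$ is fixed it cannot be the left endpoint of $J_n$ for varying $n$, and there is no diagonal principle that upgrades a joint-continuity failure to a separate-continuity failure. Separate discontinuity here means that for the \emph{fixed} endpoint $c$ the map $s\mapsto \Ave[f](s,c)=\frac{1}{c-s}\int_s^{c}f$ fails to tend to $f(c)=0$. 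For $s$ at a block boundary this integral is a \emph{tail} $\sum_{m\ge n}\int_{J_m}f\approx\sum_{m\ge n}\ell_m\varepsilon_m z_m$, where $z_m$ is the $m$-th bad average. You know $\|z_m\|\gtrsim\lambda_m$, but that gives no lower bound on the norm of the sum: the vectors $z_m$ live in unrelated finite-dimensional pieces of $X$ and may well cancel in quasi-norm. So the step ``$\|\Ave[f](s_n,c)\|\gtrsim\varepsilon_n\lambda_n$'' is unjustified, and in fact need not hold for your construction.

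The paper closes exactly this gap with a device you are missing: each block is \emph{doubled with a sign flip}. For the $q$-th block one lays down the pieces $\mu_{q,j}y_{q,j}$ for $j=q,\dots,1$ with sign $-1$ and then again for $j=1,\dots,q$ with sign $+1$, so that the integral over the full doubled block is \emph{exactly} zero (equation~\eqref{fundamentalvanishing}). Consequently every tail $\int_s^{1}f$, for $s$ at a doubled-block boundary, vanishes, and for $s$ at the \emph{midpoint} of a doubled block it equals the integral over a single half-block, whose norm is bounded below by $A_qC_q\beta_q/2$ (equation~\eqref{fundamentalfact}). This is what converts the single-block estimate into a genuine separate-continuity failure. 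The same cancellation also streamlines your step five: any window meets at most two partial doubled blocks, so boundedness reduces to the single-block bound $A_qC_q$ (Lemma~\ref{en2}(iii)). With the choices $A_q=1/C_q$ and $\beta_q=2^{-q-1}$ one gets $A_qC_q\equiv 1$ (bounded) while $A_qC_q\beta_q\big/\sum_{r\ge q}\beta_r\equiv 1/2\not\to 0$ (separate continuity fails). Your budgeting instincts are right, but the $\pm$ doubling is the missing structural idea.
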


If we put
 \begin{align*}
\Cont_{R}(I,X)&=\{f:I\to X : f \text{ is continuous and Riemann-integrable}\};\\
\Cont_{jc}(I,X)&=\{f\in \Cont_{R}(I,X) :  \Ave[f] \text{ is jointly continuous}\};\\
\Cont_{sc}(I,X)&=\{f\in \Cont_{R}(I,X) : \Ave[f] \text{ is separately continuous}\};\\
\Cont_{b}(I,X)&=\{f\in \Cont_{R}(I,X) :  \Ave[f] \text{ is bounded}\},
\end{align*}
using function-theory terminology the above gives that, on the one hand, 
\[
\Cont_{jc}(I,X)\subsetneqq \Cont_{sc}(I,X)\cap \Cont_{b}(I,X) \subsetneqq \Cont_{b}(I,X)
\]
and, on the other hand,
\[
\Cont_{sc}(I,X)\cap \Cont_{b}(I,X) \subsetneqq \Cont_{sc}(I,X).
\]
In this paper we will complete our study by showing that 
\[\Cont_{b}(I,X) \cup \Cont_{sc}(I,X)\subsetneqq \Cont_{R}(I,X),\]
or, equivalently,

 \begin{thm}\label{theoremnotboundednotseparately} 
 Let $X$ be a non-locally convex quasi-Banach space. Then there exists $f:I\to X$ continuous and Riemann-integrable such that $\Ave[f]$  is neither  bounded nor separately continuous.
  \end{thm}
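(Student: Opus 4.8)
The plan is to combine the constructions underlying Theorems~\ref{theoremboundedseparatelynotjointly} and~\ref{theoremboundednotseparately}—or rather, the pieces that make $\Ave[f]$ fail to be bounded and fail to be separately continuous—and glue them together on disjoint subintervals of $I$. Concretely, I would first reduce to the model situation: by the Aoki--Rolewicz theorem we may assume the quasi-norm is a $p$-norm for some $0<p<1$, and since $X$ is non-locally convex, a standard argument (as surely used in Section~2 for the earlier theorems) produces, for each $n$, a finite sequence of vectors in $X$ of small norm whose consecutive partial sums (the ``primitives'') stay bounded while a relevant average blows up. The key point is that all the earlier pathologies are built from such finitely-supported ``bump'' data living on a short interval; so the real content is a gluing lemma.

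The main steps, in order, would be: (1) Fix a sequence of pairwise disjoint closed subintervals $J_k=[a_k,b_k]\subset I$ with $b_k\downarrow 0$, say $J_k=[2^{-k-1},2^{-k}\cdot\tfrac{3}{4}]$ or similar, leaving room between them. (2) On $J_1$, place (a rescaled copy of) the construction from Theorem~\ref{theoremboundednotseparately} that yields a continuous integrable $g$ with $\Ave[g]$ bounded but not separately continuous at some interior point $c_1\in J_1$; extend $g$ by $0$ outside a slightly larger interval, arranging continuity. (3) On the intervals $J_k$ for $k\ge 2$, place rescaled and \emph{amplitude-scaled} copies of the unboundedness construction from (the proof of) Theorem~\ref{AANA2}/Theorem~\ref{theoremboundedseparatelynotjointly}: choose the $k$-th copy so that $\sup\Vert\Ave[\cdot]\Vert$ on $J_k$ is $\ge k$, while the sup-norm of the function itself on $J_k$ tends to $0$ as $k\to\infty$ (this is where non-local convexity is essential—one cannot have both in a locally convex space). (4) Define $f$ to be the sum of all these pieces (the supports are disjoint, so no convergence issue for the values), plus check $f$ is continuous at $0$ because the amplitudes go to $0$, and continuous elsewhere by construction. (5) Verify $f$ is Riemann-integrable on $I$: each piece is integrable, the pieces are supported on disjoint intervals accumulating only at $0$, and one controls the oscillation near $0$ by the decay of amplitudes—so a standard $\varepsilon$-argument (cut off a neighborhood of $0$ carrying total variation $<\varepsilon$, handle the rest as a finite sum) applies. (6) Conclude: $\Ave[f]$ restricted to $J_1\times J_1$ agrees with $\Ave[g]$ up to the contribution of the other pieces, which is small/continuous there, so $\Ave[f]$ is not separately continuous at $(c_1,c_1)$; and $\sup_{s,t\in J_k}\Vert\Ave[f](s,t)\Vert\ge k - O(1)$, so $\Ave[f]$ is unbounded.

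The main obstacle I anticipate is step (6), the bookkeeping showing that the \emph{interaction} between distant pieces does not destroy either pathology. For the unboundedness this is easy—unboundedness on $J_k\times J_k$ only needs one bad pair $(s,t)$ and the other pieces contribute a \emph{bounded} (indeed, for $t$ near $J_k$, a controlled) quantity to $\Ave[f](s,t)$, which cannot cancel an arbitrarily large term. The delicate part is preserving the \emph{failure of separate continuity} at $c_1$: one must check that for $t_0$ fixed near $c_1$, the map $s\mapsto \Ave[f](s,t_0)$ still has the prescribed jump, i.e.\ that adding $\sum_{k\ge2}(\text{piece}_k)$ changes $\Ave[f](s,t_0)$ only by a quantity that is \emph{continuous} in $s$ near $c_1$. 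This should follow because, for $s,t_0$ in a fixed neighborhood of $c_1$ bounded away from $0$, the average of each far-away piece over $[s,t_0]$ is a smooth (even real-analytic) function of $(s,t_0)$—it is an integral of a fixed integrable function over $[s,t_0]$, hence jointly continuous there by the usual Banach-space estimate since that piece is itself bounded—so the sum over $k$ converges uniformly (amplitudes summable after the $p$-norm, or arrange $\sum_k 2^{-k}\cdot(\text{amplitude bound})<\infty$) to a continuous function, leaving the jump of $\Ave[g]$ intact. Once this uniform-convergence-of-the-tail estimate is in hand, the theorem follows; I would state it as a short lemma right after the construction and then the proof is three lines.
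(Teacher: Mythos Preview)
Your gluing idea can be made to work, but it is substantially more complicated than what the paper does, and one of your steps has a genuine gap.

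The paper's proof never glues anything: it simply picks parameters in the single family $f_{\mathbf a,\mathbf b}$ of Section~\ref{tayloring} so that \emph{both} pathologies arise simultaneously. With $A_q=1/\sqrt{C_q}$ and $\beta_q=2^{-q-1}$ one has $A_qC_q=\sqrt{C_q}\to\infty$, so Lemma~\ref{en2}(iii) kills boundedness, and the ratio in Lemma~\ref{en2}(iv) equals $\sqrt{C_q}/2\to\infty$, so separate continuity fails as well. That is the entire argument---three lines once the machinery is in place. What the paper's approach buys is that continuity and Riemann integrability are already handled by Lemma~\ref{en2}(i)--(ii), so no new analysis is needed.

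Your modular approach also works, but you should simplify it drastically: two pieces suffice, not infinitely many. A single rescaled copy of the function from Theorem~\ref{AANA2} on $[0,\tfrac12]$ already has unbounded average (scaling by $\epsilon_k$ does not tame an infinite supremum), and a rescaled copy of the function from Theorem~\ref{theoremboundednotseparately} on $[\tfrac12,1]$ has average failing separate continuity at $(1,1)$. Both pieces vanish at $\tfrac12$, the glued $f$ is trivially Riemann integrable (integrable on each half), and for $(s,t)$ inside either half $\Ave[f](s,t)$ agrees exactly with the average of the corresponding piece, so there is no ``interaction'' to analyse at all. (Two minor slips: the failure of separate continuity in these constructions occurs at the \emph{endpoint} of the carrying interval, not at an interior $c_1\in J_1$; and Theorem~\ref{theoremboundedseparatelynotjointly} gives a \emph{bounded} average, so it is not a source of unboundedness---you want Theorem~\ref{AANA2}.)

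The genuine gap is your step~(5) for the infinite gluing. In a non-locally convex space the ``standard $\varepsilon$-argument'' you invoke does \emph{not} go through: knowing $\Vert f\Vert_\infty<\varepsilon$ on $[0,\delta]$ does not bound a Riemann sum $\sum f(\xi_i)\Delta_i$ over that interval, because with a $p$-norm ($p<1$) one only gets $\Vert\sum f(\xi_i)\Delta_i\Vert\le\varepsilon\bigl(\sum\Delta_i^p\bigr)^{1/p}$, and the right-hand side blows up as the mesh shrinks. This is precisely the Mazur--Orlicz obstruction. One can likely repair this by exploiting the internal structure of each piece (e.g.\ using Lemma~\ref{en1}(ii) quantitatively), but then you are re-entering the construction rather than treating the pieces as black boxes---at which point the paper's one-function argument is cleaner.
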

  
  It is worth it  pointing out that Theorem~\ref{theoremnotboundednotseparately}  was proved for  the particular case $X=\ell_p$ ($0<p<1$) in \cite{Popov1994}*{Theorem 2.1}.

Roughly speaking, the moral of this article  will be that to hope for good approximation properties by means of the Riemann integral in the setting of non-locally convex quasi-Banach spaces we need to impose more restrictions to the functions we are working with, a subject that we plan to investigate in a further publication.  

From a different perspective, the authors introduced in \cite{AlbiacAnsorenaNA} the following spaces. We will denote by $\Cont^{(1)}(I,X)$ the space of all continuously differentiable functions from a compact interval of the real line into a quasi-Banach space $X$ which vanish at $0$. 
  $
\Cont^{(1)}_{\rm Lip}(I,X)$ will consist  of all $f\in \Cont^{(1)}(I,X)$ which are Lipschitz, equipped with the quasi-norm
\begin{equation}\label{LipNorm}
\Vert f\Vert_{\Lip}=\sup_{0\le s<t\le 1}\frac{\Vert f(t)-f(s)\Vert}{t-s}.
\end{equation}
We will also consider the space
 $\Cont^{(1)}_{\Kal}(I,X)$ of all  $f\in \Cont^{(1)}_{\Lip}(I,X)$  such that the function $g:I^{2}\to X$ given by
$$ 
g(s,t)=\begin{cases}\displaystyle \frac{f(s)-f(t)}{s-t} & s\not=t\\
 f^{\prime}(t) & s=t
 \end{cases}
 $$
  is continuous. Of course, when $X$ is a Banach space, $\Cont^{(1)}_{\Kal}(I,X)=\Cont^{(1)}_{\Lip}(I,X)=\Cont^{(1)}(I,X)$. However, if $X$ is a non-locally convex quasi-Banach space then $\Cont^{(1)}_{\Lip}(I,X)\subsetneqq\Cont^{(1)}(I,X)$ as was proved in \cite{AlbiacAnsorenaNA}. As a consequence of Theorem~\ref{theoremboundedseparatelynotjointly} we can now complete the picture and state that $\Cont^{(1)}_{\Kal}(I,X)\subsetneqq\Cont^{(1)}_{\Lip}(I,X)$.

We refer to \cite{AlbiacAnsorenaNA} for additional background  and to  \cite{KPR1985,  Rolewicz1985} for the needed terminology and  notation on quasi-Banach spaces.

\section{A technique for customizing functions}\label{tayloring}

 Here and subsequently  $(X,\Vert\cdot\Vert)$ will be a real quasi-Banach space.  By the  the Aoki-Rolewicz theorem we will assume that the quasi-norm on $X$ is $p$-subadditive for some $0<p\le 1$, i.e.,
 $$
 \Vert x+y\Vert^{p}\le \Vert x\Vert^{p}+\Vert y\Vert^{p},\qquad \forall x,y\in X.
 $$ 
The proof of our  theorems in Section 3 relies essentially on 
the following construction, originally
  inspired by \cite{Popov1994}, which we adapted and extended in  \cites{AlbiacAnsorenaNA, AlbiacAnsorenaStudia}.  
    
Let  $\boldsymbol{\lambda}=(\lambda_{k})_{k=1}^{\infty}$  be a sequence of positive scalars with $\sum_{k=1}^\infty \lambda_k =1$. Consider, for
$n\in\N$, $t_n=\sum_{k=1}^n \lambda_k$, and $I_n=[t_{n-1}, t_{n})$,
 so that $(I_k)_{k=1}^\infty$ is a sequence of disjoint intervals each  one of length $\lambda_k$, whose union is $[0,1)$. 
 
 For each $k\in \N$ let $f_{I_{k}}$ be the nonnegative piecewise linear function supported on the interval $I_{k}$ having a node at the midpoint  of the interval $c_{k}=(t_{k}+t_{k-1})/2$  with $f_{I_{k}}(c_{k})=2$  and $f(t_{k-1})= f(t_{k})=0$, i.e.,
\[
f_{I_{k}}= \begin{cases}\displaystyle
\frac{4}{t_{k}-t_{k-1}}(t-t_{k-1}) &\text{if}\;\; t\in [t_{k-1}, c_{k}),\\
\displaystyle\frac{4}{t_{k}-t_{k-1}}(t-t_{k}) &\mbox{if}\;\; t\in [c_{k}, t_{k}),\\
0  &\text{otherwise}.\end{cases}
\]
Let   $\mathbf{x}=(x_{n})_{n=1}^{\infty}$ be a sequence of vectors in  $X$. 
With these ingredients we define the function $$f=f(\boldsymbol{\lambda},\mathbf{x}):I\to X$$ as
\begin{equation}\label{definitionf}
f(t)=\begin{cases} f_{I_{k}}(t) x_{k} & \text{if}\; t\in I_{k},\\
0& \text{if}\; t=1.
\end{cases}
\end{equation}
Note that  for each $s<1$ the set $f([0,s])$ lies on a finite-dimensional subspace of $X$. Hence, we can   consider the corresponding average function $\Ave[f]$ on $[0,1)\times[0,1)$ given by
\begin{equation}\label{definitionAve}
\Ave[f](s,t)=\begin{cases}{\displaystyle \frac{1}{t-s}\int_{s}^{t}f(u)\,du}, &\text{if}\;\; 0\le s<t<1,\\
f(c), &\mbox{if}\;\;0\le s=t=c < 1,\\
{\displaystyle \frac{1}{s-t}\int_{t}^{s}f(u)\,du}, &\text{if}\;\; 0\le t<s <1.
\end{cases}
\end{equation}

Clearly, $\Ave[f]$ is jointly  continuous on $[0,1)\times[0,1)$.

The next auxiliary lemma summarizes several results and ideas contained in \cites{AlbiacAnsorenaNA, AlbiacAnsorenaStudia}. 

\begin{lem}\label{en1} 
 For a given pair  $(\boldsymbol{\lambda}, \mathbf{x})$
  we have:

\begin{enumerate}
\item[(i)] The function $f=f(\boldsymbol{\lambda},\mathbf{x})$ is continuous on $I$ if and only if $x_{k}\to 0$.

\item[(ii)] Suppose that    $X$ is   $p$-convex. 
If  $(x_{k})_{k=1}^{\infty}$ is bounded  and the sequence $(\lambda_{k})_{k=1}^\infty$ verifies $\sum_{k=1}^{\infty}\lambda_{k}^{p}<\infty$, then $\sum_{k=1}^{\infty}\lambda_{k} x_k$ converges,   $f$ is Riemann-integrable on $I$, and $\int_0^1 f(u)\, du= \sum_{k=1}^{\infty}\lambda_{k} x_k$.

\item[(iii)]  The function $\Ave[f]$ is bounded on $[0,1)\times[0,1)$ if and only if there is $\mathsf{L}> 0$ so that for all integers $m,n$ with $m\le n$,
\begin{equation}\label{condition3}
\frac{\left\Vert\displaystyle \sum_{m \le k \le  
n} \lambda_k x_k\right\Vert}{\displaystyle\sum_{m \le k \le n} \lambda_k}
 \le \mathsf{L}.
\end{equation}

\item[(iv)] Suppose $x_{k}\to 0$. Then   $\Ave[f]$  extends to a  separately continuous function  on $I^{2}$  mapping the point  $(1,1)$ to $0\in X$
 if and only if $\sum_{k=1}^\infty \lambda_k x_k$ converges and
\begin{equation}\label{condition4}
\lim\limits_{n\to\infty}\frac{\displaystyle\left\Vert\sum_{k \ge n} \lambda_k x_k\right\Vert}{\displaystyle\sum_{k 
\ge n} \lambda_k}
= 0.
\end{equation}
Moreover, if $\Ave[f]$  extends to a  separately continuous function on  $I^{2}$ that maps the point $(1,1)$ to a vector $x\in X$, then $\sum_{k=1}^\infty \lambda_k x_k$  converges and
\[
\lim\limits_{n\to\infty}\frac{\displaystyle\sum_{k \ge n} \lambda_k x_k}{\displaystyle\sum_{k 
\ge n} \lambda_k}
= x.
\]

\item[(v)] $\Ave[f]$ extends to a jointly continuous function on $I^{2}$  if and only if 
\begin{equation}\label{condition5}
\lim\limits_{m,n\to\infty}
\frac{\displaystyle\left\Vert\sum_{m \le k \le  
n} \lambda_k x_k\right\Vert}{\displaystyle\sum_{m \le k \le n} \lambda_k} =0.
\end{equation}

\end{enumerate} 
\end{lem}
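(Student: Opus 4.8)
The plan is to reduce every item to explicit computations with the tent functions $f_{I_k}$, so that the only genuinely delicate point is the Riemann integrability in (ii). Item (i) I would dispatch immediately: on each $[t_{k-1},t_k]$ the function $f$ coincides with the continuous map $t\mapsto f_{I_k}(t)x_k$, which vanishes at both endpoints, so $f$ is automatically continuous on $[0,1)$; since $\sup_{t\in I_k}\|f(t)\|=2\|x_k\|$, continuity at $t=1$ (where $f(1)=0$) holds precisely when $\|x_k\|\to0$.

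For everything that follows the key preliminary remark I would set up is that, $f_{I_k}$ being supported on $I_k$, one has $f(\xi)=\sum_k f_{I_k}(\xi)x_k$ for every $\xi\in I$; hence for any tagged partition $(u_i,\xi_i)$ and any $0\le s<t<1$ both the Riemann sum and the integral split into finite sums $\sum_i f(\xi_i)(u_i-u_{i-1})=\sum_k\bigl(\sum_i f_{I_k}(\xi_i)(u_i-u_{i-1})\bigr)x_k$ and $\int_s^t f=\sum_k\bigl(\int_s^t f_{I_k}\bigr)x_k$. Moreover $\int_{t_{k-1}}^{t_k}f_{I_k}=\lambda_k$, while for $s\in I_m$ and $t\in(t_{n-1},t_n]$ with $m\le n$ the truncated masses $\int_s^{t_m}f_{I_m}=\gamma\lambda_m$ and $\int_{t_{n-1}}^{t}f_{I_n}=\beta\lambda_n$ are fractions of $\lambda_m,\lambda_n$ obeying $\gamma\lambda_m\le 2(t_m-s)\le 2(t-s)$ and $\beta\lambda_n\le 2(t-t_{n-1})\le 2(t-s)$, because $0\le f_{I_k}\le2$; also $\sum_{m<k<n}\lambda_k\le t-s$.

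For item (ii), the convergence of $\sum_k\lambda_k x_k$ is immediate from $p$-subadditivity and $\sum_k\lambda_k^p\|x_k\|^p\le(\sup_k\|x_k\|)^p\sum_k\lambda_k^p<\infty$. For the Riemann integrability I would approximate $f$ by the continuous truncations $f_N=\sum_{k\le N}f_{I_k}x_k$, for which $\int_0^1 f_N=\sum_{k\le N}\lambda_k x_k$, and, using the splitting above, write $\sum_i f(\xi_i)(u_i-u_{i-1})-\sum_k\lambda_k x_k=\sum_k r_k(P)x_k$ with $r_k(P)=\sum_i f_{I_k}(\xi_i)(u_i-u_{i-1})-\lambda_k$, so that by $p$-subadditivity $\bigl\|\sum_i f(\xi_i)(u_i-u_{i-1})-\sum_k\lambda_k x_k\bigr\|^p\le(\sup_k\|x_k\|)^p\sum_k|r_k(P)|^p$. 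The part of this sum with $k\le N$ tends to $0$ with the mesh of $P$ because each $f_{I_k}$ is Lipschitz, and what remains is to make the tail $\sum_{k>N}|r_k(P)|^p$ small uniformly in $P$. Here I would split the indices $k>N$ into the \emph{wide} ones (those with $\lambda_k$ at least the mesh of $P$), for which $|r_k(P)|\lesssim\lambda_k$ and the contribution is dominated by $\sum_{k>N}\lambda_k^p$, and the \emph{narrow} ones, for which one bounds $\sum_i f_{I_k}(\xi_i)(u_i-u_{i-1})$ by twice the total length of the pairwise non-overlapping subintervals tagged inside $I_k$ and plays this off against the smallness of $\sum_{k>N}\lambda_k$. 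This tail estimate is where I expect the main obstacle to lie: by the theorem of Mazur and Orlicz \cite{MazurOrlicz1948} a bounded function into a non-locally convex space need not be Riemann integrable, so the argument must exploit the tent structure of $f$ quantitatively, and it is precisely here that the hypothesis $\sum_k\lambda_k^p<\infty$ is used to defeat the lack of convexity; this is in essence the computation already carried out in \cites{AlbiacAnsorenaNA, AlbiacAnsorenaStudia}, which I would invoke.

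Items (iii), (iv) and (v) are then bookkeeping with the two identities above. Evaluating $\Ave[f]$ at ``block endpoints'' gives $\Ave[f](t_{m-1},t_n)=\bigl(\sum_{m\le k\le n}\lambda_k x_k\bigr)\big/\bigl(\sum_{m\le k\le n}\lambda_k\bigr)$, and likewise $\Ave[f](t_{n-1},1)=\bigl(\sum_{k\ge n}\lambda_k x_k\bigr)\big/\bigl(\sum_{k\ge n}\lambda_k\bigr)$ once $\int_0^1 f$ is known to exist; letting the relevant indices tend to infinity yields the necessity of \eqref{condition3}, \eqref{condition4} and \eqref{condition5} and, in the ``moreover'' part of (iv), identifies the limit as $x$, while in (iv) and (v) the mere existence of the extension already forces $\lim_{t\to1^-}\int_s^t f$, i.e.\ $\sum_k\lambda_k x_k$, to converge. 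For the sufficiency directions I would take $0\le s<t$, use the decomposition $\int_s^t f=\gamma\lambda_m x_m+\sum_{m<k<n}\lambda_k x_k+\beta\lambda_n x_n$ (the case $m=n$ being trivial, since then $\|\Ave[f](s,t)\|\le 2\|x_m\|$), and apply $p$-subadditivity: with $\|\lambda_k x_k\|\le\mathsf L\lambda_k$, $\gamma\lambda_m\le 2(t-s)$, $\beta\lambda_n\le 2(t-s)$ and $\sum_{m<k<n}\lambda_k\le t-s$ one gets $\|\Ave[f](s,t)\|\le\mathsf L(1+2^{p+1})^{1/p}$, and similarly $\|f(c)\|\le 2\mathsf L$ on the diagonal, which is (iii); keeping instead $\gamma^p\lambda_m^p\|x_m\|^p\le 2^p\|x_m\|^p(t-s)^p$ and $\beta^p\lambda_n^p\|x_n\|^p\le 2^p\|x_n\|^p(t-s)^p$ on the two boundary-tent terms (these tend to $0$ since $x_k\to0$) and controlling the middle term by $\|\sum_{m<k<n}\lambda_k x_k\|^p\big/\bigl(\sum_{m<k<n}\lambda_k\bigr)^p$ yields, via \eqref{condition4} (with one of $s,t$ held equal to $1$), the separate continuity of the extension of $\Ave[f]$ at $(1,1)$ and hence (iv), and, via \eqref{condition5} (with $s,t\to1$ independently), its joint continuity there and hence (v). Finally, since $\Ave[f]$ is already jointly continuous on $[0,1)^2$ and continuity of the extension at the boundary points $(1,t_0)$, $(t_0,1)$ with $t_0<1$ is just the continuity of $s\mapsto\int_0^s f$ on $[0,1]$ — equivalent to the convergence of $\sum_k\lambda_k x_k$ — only the corner $(1,1)$ carries content; and as \eqref{condition5} with $m=n$ already gives $\|x_k\|\to0$, the continuity of $f$ in (v) comes for free.
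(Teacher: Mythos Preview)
Your proposal is correct. The paper does not actually supply a proof of this lemma; it merely records that the statement ``summarizes several results and ideas contained in \cites{AlbiacAnsorenaNA, AlbiacAnsorenaStudia}'' and moves on. Your sketch is therefore not competing with an argument in the paper but rather reconstructing what those references contain, and it does so accurately: evaluating $\Ave[f]$ at the block endpoints $(t_{m-1},t_n)$ and $(t_{n-1},1)$ is exactly how one extracts the necessity of \eqref{condition3}, \eqref{condition4}, \eqref{condition5}, and your three-term decomposition $\int_s^t f=\gamma\lambda_m x_m+\sum_{m<k<n}\lambda_k x_k+\beta\lambda_n x_n$ together with the elementary estimates $\gamma\lambda_m,\beta\lambda_n\le 2(t-s)$ and $\sum_{m<k<n}\lambda_k\le t-s$ is the natural route to sufficiency via $p$-subadditivity. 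The observation that \eqref{condition5} with $m=n$ forces $x_k\to 0$, so that (v) is self-contained, is a nice touch. For part~(ii) you correctly identify the tail estimate on $\sum_{k>N}|r_k(P)|^p$ as the only genuinely delicate point and, like the paper itself, defer to \cites{AlbiacAnsorenaNA, AlbiacAnsorenaStudia} for the details; the wide/narrow splitting you outline is indeed the mechanism used there.
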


Proving our results  requires rigging the technique for tailoring functions exhibited in this lemma by implementing one more layer of complexity in the choice of the sequences $(x_{k})_{k=1}^{\infty}$ and $(\lambda_{k})_{k=1}^{\infty}$ when $X$ is non-locally convex. 

In what follows, the notation $(\alpha_{i})\approx (\beta_{i})$ will be used  in the regular sense  that 
$A\alpha_{i}\le \beta_{i}\le B\alpha_{i}$ for all $i$, where $A$ and $B$ are some positive constants.

Let $q$ be a
positive integer. For any
$(\mu_j)_{j=1}^q$ in $(0,\infty)$, and  $(y_j)_{j=1}^q$ in  $X$     such that  $\sum_{j=1}^q \mu_j =1$ and
$\Vert y_j\Vert\le 1$, we have
\begin{equation*}
\left\Vert\sum_{j=1}^q \mu_j y_j\right\Vert \le \left(\sum_{j=1}^q \mu_j^p\right)^{1/p}\le q^{1/p-1} .
\end{equation*}
We set
\[  C_q=\sup\left\{ \left\Vert\sum_{j=1}^q \mu_j y_j\right\Vert \, : \,  \mu_j>0, \, \sum_{j=1}^{q} \mu_j =1, \, y_j\in X, \, \Vert y_j\Vert \le 1\right\}.\]
 Clearly  $(C_q)_{q=1}^{\infty}$ is an increasing sequence and, if $X$ is not locally convex, 
 $C_q\to \infty$.  Moreover $C_q \le q^{1/p-1}$.

From our choice of $C_{q}$, for each $q$ there exist positive scalars
$(\mu_{q,j})_{j=1}^q$ and vectors $(y_{q,j})_{j=1}^q$ in $X$ such that $\sum_{j=1}^q  \mu_{q,j} =1$,
$\Vert y_{q,j}\Vert \le 1$ and  $\left\Vert\sum_{j=1}^q \mu_{q,j} y_{q,j}\right\Vert \ge C_{q}/2.$

Every natural number $k$ can be written in a unique way in the form
\begin{equation}\label{integerrepresentation}
k=\frac{2q^2+1}{2}+\varepsilon \frac{2j-1}{2},
\end{equation}
for some $q\in\N$, $\varepsilon\in\{-1,1\}$ and $1\le j\le q$.
In fact,  for a fixed $q$ we have
\begin{itemize}
\item the set $\displaystyle \left\{\frac{2q^2+1}{2}- \frac{2j-1}{2}: 1\le j\le q\right\}$  covers all the integers between $q(q-1)+1$ and $q^2$;   
\item the set $\displaystyle \left\{\frac{2q^2+1}{2}+ \frac{2j-1}{2}: 1\le j\le q\right\}$   covers all the integers between $q^2+1$ and $q(q+1)$,
\end{itemize}
so that  the numbers $\displaystyle \left\{\frac{2q^2+1}{2}+\epsilon \frac{2j-1}{2}: 1\le j\le q \right\}$  run over all the integers between $q(q-1)+1$ and $q(q+1)$.

For each $k\in \N$, let $q=q(k)$, $ j=j(k)$, and $\varepsilon=\varepsilon(k)$ uniquely determined by the representation \eqref{integerrepresentation}.  

Let $\mathbf{a}=(A_q)_{q=1}^\infty$ and $\mathbf{b}=(\beta_q)_{q=1}^\infty$ be two sequences of positive scalars with  $\sum_{q=1}^\infty \beta_q=1/2$.  We consider  $\mathbf{x}=(x_{k})_{k=1}^{\infty}$  in $X$ given 
by 
\begin{equation}\label{vectorsconstructed}
x_k= \varepsilon A_q y_{q,j},
\end{equation}
and   $\boldsymbol{\lambda}=(\lambda_{k})_{k=1}^{\infty}$ given 
by 
\begin{equation}\label{scalarsconstructed}
\lambda_k=\beta_q\mu_{q,j}.
\end{equation}
Note that 
\[
\sum_{k=1}^{\infty} \lambda_k=\sum_{q=1}^{\infty}  \sum_{j=1}^q \sum_{\varepsilon=\pm 1} \mu_{q,j}\beta_q 
=2 \sum_{q=1}^{\infty} \sum_{j=1}^q \mu_{q,j} \beta_q 
= 2\sum_{q=1}^{\infty} \beta_q
=1,
\]
so that we can define  maps $f:I\to X$
 and $\Ave[f]:[0,1)\times[0,1)\to X$  as told in (\ref{definitionf}) and (\ref{definitionAve}), respectively.
 
 Note that now the maps $f$ and $\Ave[f]$ depend also on the choice of the scalars
$\mu_{q,j}$ and the vectors $y_{q,j}$.   In order  to avoid cumbrous notations and  to emphasize the  role  of the sequences
 $\mathbf{a}$ and $\mathbf{b}$ in the construction of  the specific functions  that will best suit our  needs  in  Section 3,  in the next lemma we will refer to  them as $f_{\mathbf{a}, \mathbf{b}}$  and 
 $\Ave[f_{\mathbf{a}, \mathbf{b}}]$.
 
 \begin{lem}\label{en2} 
 For a given pair  $(\mathbf{a}, \mathbf{b})$ with
 $\mathbf{a}=(A_q)_{q=1}^\infty$ and $\mathbf{b}=(\beta_q)_{q=1}^\infty$,
   we have:
\begin{enumerate}
\item[(i)] If $A_{q}\to 0$, then $f_{\mathbf{a}, \mathbf{b}}$ is continuous on $I$.

\item[(ii)] Suppose that    $X$ is   $p$-convex   for some $0<p\le 1$. 
If  $(A_q)_{q=1}^\infty$ is bounded  and 
$\sum_{q=1}^{\infty} q^{1-p}  \beta_{q}^{p}<\infty$, then 
$f_{\mathbf{a}, \mathbf{b}}$ is Riemann-integrable on $I$ and $\int_0^1 f_{\mathbf{a}, \mathbf{b}}(u)\, du=0$.

\item[(iii)] The average function $\Ave[f_{\mathbf{a}, \mathbf{b}}]$ is bounded on $[0,1)\times[0,1)$ if and only if
the sequence $(A_q C_q)_{q=1}^\infty$ is bounded. 

\item[(iv)] Suppose $A_{q}\to 0$. Then $\Ave[f_{\mathbf{a}, \mathbf{b}}]$  extends to a  separately continuous function on  $I^{2}$ if and only if 
\begin{equation*}
\lim\limits_{q\to\infty}\frac{ A_q C_q \beta_q}{\displaystyle\sum_{r\ge q} \beta_r}= 0.
\end{equation*}
Moreover,  in the positive case, the extended function maps the point $(1,1)$ to $0\in X$.

\item[(v)] $\Ave[f_{\mathbf{a}, \mathbf{b}}]$ extends to a jointly continuous function on $I^{2}$ if and only if 
$A_q C_q\to 0$.
\end{enumerate} 
\end{lem}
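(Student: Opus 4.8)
The plan is to read off each of the five items from the matching item of Lemma~\ref{en1}, translating the hypotheses on the pair $(\boldsymbol\lambda,\mathbf x)$ into hypotheses on $(\mathbf a,\mathbf b)$. Two structural facts of the construction \eqref{vectorsconstructed}--\eqref{scalarsconstructed} do all the work. First, the two indices $k$ sharing a given pair $(q,j)$ carry the \emph{same} weight $\lambda_k=\beta_q\mu_{q,j}$ but \emph{opposite} vectors $x_k=\pm A_q y_{q,j}$, so $\sum_k\lambda_k x_k$ over a complete ``$q$-block'' (the $2q$ indices $k$ with $q(k)=q$) is $0$. Second, for any subinterval $J\subseteq\{1,\dots,q\}$ the $p$-subadditivity and the definition of $C_q$ give $\bigl\Vert\sum_{j\in J}\mu_{q,j}y_{q,j}\bigr\Vert\le C_q\sum_{j\in J}\mu_{q,j}$, while for $J=\{1,\dots,q\}$ we also have the reverse estimate $\bigl\Vert\sum_{j=1}^{q}\mu_{q,j}y_{q,j}\bigr\Vert\ge C_q/2$ by the choice of the $\mu_{q,j}$ and $y_{q,j}$.

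Items (i) and (ii) are immediate. For (i), $\Vert x_k\Vert\le A_{q(k)}$ with $q(k)\to\infty$ settles it via Lemma~\ref{en1}(i). For (ii), $(x_k)$ is bounded and $\sum_k\lambda_k^p=2\sum_q\beta_q^p\sum_{j=1}^q\mu_{q,j}^p\le 2\sum_q q^{1-p}\beta_q^p<\infty$ by Hölder's inequality, so Lemma~\ref{en1}(ii) gives integrability and $\int_0^1 f_{\mathbf a,\mathbf b}=\sum_k\lambda_k x_k$, whose value is $0$ because the partial sums taken at the ends of $q$-blocks all vanish and form a subsequence of a convergent sequence.

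The heart of the matter is (iii)--(v). In every case the \emph{necessity} half is obtained by testing the criterion of Lemma~\ref{en1} on the $\varepsilon=+1$ half $[q^2+1,q^2+q]$ of the $q$-block, where $\sum\lambda_k x_k=\beta_q A_q\sum_{j=1}^q\mu_{q,j}y_{q,j}$ has norm $\ge\beta_q A_q C_q/2$ and $\sum\lambda_k=\beta_q$, so the relevant quotient is $\ge A_q C_q/2$; for the tail criterion in (iv) one tests instead on $\{k\ge q^2+1\}$, whose $\lambda$-mass is comparable to $\sum_{r\ge q}\beta_r$, producing a quotient $\gtrsim A_q C_q\beta_q/\sum_{r\ge q}\beta_r$. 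The \emph{sufficiency} half is where the bookkeeping lives: given an index window $[m,n]$, discard the complete $q$-blocks it contains---they contribute $0$ to the numerator and only swell the denominator---leaving at most two partial blocks at the two ends; split each partial block along $\varepsilon=\pm1$ into at most two subintervals $J$, estimate each summand by $C_q\sum_{j\in J}\mu_{q,j}$, and recombine using $p$-subadditivity together with the elementary bounds $a^p+b^p\le 2^{1-p}(a+b)^p$ ($0<p\le1$) and $\tfrac{a\tau}{b\tau+c}\le\tfrac{a}{b+c}$ ($0\le\tau\le1$). This shows that the quotient in \eqref{condition3} is at most a constant times $\sup_q A_q C_q$, that in \eqref{condition5} at most a constant times the largest $A_q C_q$ over the finitely many blocks met by $[m,n]$, and the tail quotient in \eqref{condition4} at most a constant times $A_q C_q\beta_q/\sum_{r\ge q}\beta_r$ with $q$ the block of the tail's first index; plugging these into Lemma~\ref{en1} yields (iii), (v), and the ``if'' of (iv).

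Two points still need care, and I expect the second to be the real obstacle. For (iv) one must first know that the stated condition makes $\sum_k\lambda_k x_k$ \emph{converge}, so that Lemma~\ref{en1}(iv) applies: since $\sum_r\beta_r=1/2$, the condition forces $A_q C_q\beta_q\to0$, hence the within-block oscillations of the partial sums tend to $0$, and combined with the vanishing of the block-end sums this gives convergence (necessarily to $0$). Finally, the converse in (iv) requires pinning down the vector $x$ that a separately continuous extension assigns to $(1,1)$: evaluating the limit in the ``moreover'' clause of Lemma~\ref{en1}(iv) along the subsequence $n=q(q-1)+1$ (a block start), where the tail sum is $0$, forces $x=0$, after which the lower estimate above upgrades $\limsup_q A_q C_q\beta_q/\sum_{r\ge q}\beta_r\le 4\Vert x\Vert=0$ to the required limit, and identifies the value at $(1,1)$ as $0$. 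Organizing the case analysis in the sufficiency arguments---which of the four segments (two signs $\times$ two ends of $[m,n]$) are full and which partial, and how their $\lambda$-masses compare---is the only genuinely fiddly part; once it is laid out, all the estimates are routine.
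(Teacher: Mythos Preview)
Your proposal is correct and follows essentially the same route as the paper: both translate each item of Lemma~\ref{en2} into the corresponding item of Lemma~\ref{en1} via the two structural facts you isolate---block cancellation \eqref{fundamentalvanishing} and the two-sided $C_q$ estimate on $\sum_{j\in J}\mu_{q,j}y_{q,j}$---and both test necessity on the half-block $[q^2+1,q(q+1)]$ and prove sufficiency by discarding complete blocks and controlling the two partial end-blocks with $p$-subadditivity and the monotonicity of $t\mapsto t/(t+a)$.

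There is only one minor difference worth noting. Inside a single partial block the paper first performs the cancellation between the overlapping $\varepsilon=\pm1$ halves, so that the within-block sum reduces to a \emph{single} $j$-interval and one obtains the clean bound~\eqref{fundamentalbound} directly, whence two pieces suffice in~\eqref{finalfundamentalbound}. You instead split each partial block into its two sign-halves and bound each separately before recombining; this yields up to four pieces and a slightly larger constant, but of course the conclusions are unaffected. Your Hölder inequality in~(ii), giving $\sum_j\mu_{q,j}^p\le q^{1-p}$, is in fact more accurate than the equality the paper writes (which is a harmless slip). Your handling of the value at $(1,1)$ in~(iv)---reading off $x=0$ from the subsequence $n=q(q-1)+1$ in the ``moreover'' clause of Lemma~\ref{en1}(iv) and then feeding this back into~\eqref{condition4}---matches the paper's argument exactly.
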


\begin{proof} Let $(\lambda_k)_{k=1}^\infty$ and $(x_k)_{k=1}^\infty$ as defined in (\ref{scalarsconstructed}) and (\ref{vectorsconstructed}) respectively.

(i) is clear from  Lemma~\ref{en1}(i).

(ii) follows from  Lemma~\ref{en1}(ii) since
\[
\sum_{k=1}^{\infty}  \lambda_k^p=\sum_{q=1}^{\infty}  \sum_{j=1}^q \sum_{\varepsilon=\pm 1} \mu_{q,j}^p\beta_q^p
= 2 \sum_{q=1}^{\infty} \sum_{j=1}^q \mu_{q,j}^p \beta_q^p
= 2 \sum_{q=1}^{\infty} q^{1-p} \beta_q^p<\infty.
\]

To show the other statements  we need to estimate $\Vert\sum_{m\le k\le n} \lambda_k x_k\Vert$. First, we consider the case
 $q(m)=q(n)=q$. 
 
 Suppose that  $m=(2q^2+1)/2- (2j_0-1)/2$ and $n=(2q^2+1)/2 + (2j_1-1)/2$. Then,
  \begin{align*}
 \sum_{k=m}^n \lambda_k x_k&=\sum_{i=1}^{\min\{j_0,j_1\}}  \sum_{\varepsilon=\pm 1}\varepsilon A_q \beta_q \mu_{q,j} y_{q,j}
 \pm\sum_{i=1+\min\{j_0,j_1\}}^{\max\{j_0,j_1\} }A_q \beta_q \mu_{q,j} y_{q,j}
\\& =
 \pm  A_q \beta_q \sum_{i=1+\min\{j_0,j_1\}}^{\max\{j_0,j_1\} } \mu_{q,j} y_{q,j}.
 \end{align*}
 In particular, if  $j_0=j_1$, $ \sum_{k=m}^n \lambda_k x_k=0$. Therefore,
  \begin{equation}\label{fundamentalvanishing}
\sum_{k=q(q-1)+1}^{q(q+1)} \lambda_k x_k=0.
\end{equation}
Also,
\begin{equation}\label{fundamentalfact}
\left\Vert\sum_{k=q^2+1}^{q(q+1)} \lambda_k x_k\right\Vert\ge A_qD_q  \beta_q.
\end{equation}

 Suppose that  $m=(2q^2+1)/2- (2j_1-1)/2$,  $n=(2q^2+1)/2 - (2j_0-1)/2$ (or 
  $m=(2q^2+1)/2 + (2j_0-1)/2$,  $n=(2q^2+1)/2 + (2j_1-1)/2$), with $j_0\le j_1$. Then,
  \[
 \sum_{k=m}^n \lambda_k x_k = \pm A_q\beta_q \sum_{i=j_0}^{j_1}\mu_{q,j} y_{q,j}.
  \]
 Since for any $1\le i_0\le i_1\le q$ we have
\[
\left\Vert\sum_{i=i_0}^{i_1}\mu_{q,j} y_{q,j}\right\Vert\le C_{i_1-i_0+1} \sum_{i=i_0}^{i_1}\mu_{q,j} 
\le C_q \sum_{i=i_0}^{i_1}\mu_{q,j},
\]
we get that, in any case, for $m$ and $n$  such that $q(m)=q(n)$,
\begin{equation}\label{fundamentalbound}
\left\Vert\sum_{k=m}^n \lambda_k x_k\right\Vert \le A_q C_q \sum_{k=m}^n \lambda_k,
\end{equation}
and
\begin{equation}\label{bound}
\left\Vert\sum_{k=m}^n \lambda_k x_k\right\Vert \le A_q C_q \beta_q.
\end{equation}

Now we consider the case $q_0=q(m)<q(n)=q_1$.  Using (\ref{fundamentalvanishing}), the $p$-subadditivity of the quasi-norm, and (\ref{fundamentalbound}),
 \begin{align*}
\left\Vert\sum_{k=m}^n \lambda_k x_k\right\Vert^p&=\left\Vert\sum_{k=m}^{q_0(q_0+1)} \lambda_k x_k + \sum_{k=q_1(q_1-1)+1}^n \lambda_k x_k \right\Vert^p
\\& \le\left\Vert\sum_{k=m}^{q_0(q_0+1)} \lambda_k x_k\right\Vert^p+\left\Vert\sum_{k=q_1(q_1-1)+1}^n \lambda_k x_k \right\Vert^p
\\& \le\left(A_{q_0}C_{q_0}\sum_{k=m}^{q_0(q_0+1)} \lambda_k\right)^p+\left(A_{q_1}C_{q_1}\sum_{k=q_1(q_1-1)+1}^n \lambda_k \right)^p 
\\&\le 2^{1-p} \left(A_{q_0}C_{q_0}\sum_{k=m}^{q_0(q_0+1)} \lambda_k+ A_{q_1}C_{q_1}\sum_{k=q_1(q_1-1)+1}^n \lambda_k  \right)^p.
\end{align*}
From this estimate we obtain 
\begin{equation}\label{finalfundamentalbound}
\left\Vert\sum_{k=m}^n \lambda_k x_k\right\Vert \le 2^{{1/p}-1} \max\{A_{q_0}C_{q_0}, A_{q_1}C_{q_1} \}  \sum_{k=m}^n \lambda_k,
\end{equation}
and
\begin{equation}\label{anotherbound}
\left\Vert\sum_{k=m}^n \lambda_k x_k\right\Vert  \le 2^{1/p}   (A_{q_0}C_{q_0}\beta_{q_0}+ A_{q_1}C_{q_1} \beta_{q_1}).
\end{equation}
Notice that both inequalities  are true even if $q(n)=q(m)$.

Now we are ready to tackle (iii). Suppose that $\Ave[f_{\mathbf{a}, \mathbf{b}}]$ is bounded. Using (\ref{condition3}) with $m=q^2+1$, $n=q(q+1)$ in combination with (\ref{fundamentalfact}) 
 we obtain that
 $(A_q C_q)_{q=1}^\infty$ is bounded. 
  
  Conversely, assume that there is a positive constant $\mathsf{M}$ such that
 $A_q C_q\le \mathsf{M}$ for all $q\in\N$.  Let $n,m\in\N$ with $m\le n$. Denote $q_0=q(m)$, $q_1=q(n)$.
Inequality
(\ref{finalfundamentalbound}) yields 
 \begin{equation*}
\left\Vert\sum_{k=m}^n \lambda_k x_k\right\Vert \le  2^{{1/p}-1} \mathsf{M} \sum_{k=m}^{n} \lambda_k.
\end{equation*}

To show (iv), assume that $\Ave[f_{\mathbf{a}, \mathbf{b}}]$ extends to a separately continuous function from $I^{2}$ into $X$. By (\ref{fundamentalvanishing})  we have that
$\sum_{r=q(q-1)+1}^\infty \lambda_k x_k=0$. Hence, the extension must send the point $(1,1)$ to $0$ and so (\ref{condition4}) holds.
Putting $n=q^2+1$  and appealing to (\ref{fundamentalvanishing}) and (\ref{fundamentalfact})  we obtain
\[
\lim_q\frac{A_q C_q \beta_q}{2(\beta_q+2\sum_{r>q}\beta_r)}=0,
\]
which yields the desired conclusion. 

Let us show the converse.  To see that $\sum_{k=1}^\infty \lambda_k x_k$ is a Cauchy series,
 note that $A_q C_q \beta_q \to 0$ and  use  (\ref{anotherbound}).
Let $n\in\N$ and consider $q=q(n)$. Combining (\ref{fundamentalvanishing})  and (\ref{bound}),  and taking into account  that the functions of the form $t\mapsto \frac{t}{t+a}$ ($a>0$) are increasing on $(0,\infty)$,
\begin{align*}
\frac{\displaystyle\left\Vert\sum_{k \ge n} \lambda_k x_k\right\Vert}{\displaystyle\sum_{k \ge n} \lambda_k}
=\frac{\displaystyle\left\Vert\sum_{k=n}^{q(q+1)} \lambda_k x_k\right\Vert}{\displaystyle\sum_{k \ge n} \lambda_k}
&\le \frac{\displaystyle A_q C_q\sum_{k=n}^{q(q+1)} \lambda_k}{\displaystyle\sum_{k \ge n} \lambda_k}
\\&= \frac{\displaystyle A_q C_q\sum_{k=n}^{q(q+1)} \lambda_k}{\displaystyle\sum_{k=n}^{q(q+1)} \lambda_k + 2\sum_{r> q} \beta_r} 
\\ &\le \frac{\displaystyle A_q C_q\beta_q}{\displaystyle\sum_{r\ge q} \beta_r} \to 0.
\end{align*}

(v)  Suppose that  $\Ave[f_{\mathbf{a}, \mathbf{b}}]$ extends to a jointly continuous function on $I^2$. Using
 (\ref{condition5})  with $m=q^2+1$ and $n=q(q+1)$ and appealing to (\ref{fundamentalfact})  we obtain $A_q C_q\to 0$. The converse follows from
 (\ref{finalfundamentalbound}).
\end{proof}

 \section{Completion of the proofs of the main theorems}\label{proofs}
 
  Throughout this section $X$ will be a non-locally convex quasi-Banach space whose quasi-norm is assumed to be $p$-subadditive for some  $p<1$. 
   
 \begin{proof}[Proof of Theorem~\ref{theoremboundedseparatelynotjointly}] 
 Pick  $b>2(1-p)/p$ and let $\mathbf{a}=(A_q)_{q=1}^\infty$ and $\mathbf{b}=(\beta_q)_{q=1}^\infty$
 be scalar sequences given by
 \begin{equation*}
 A_q=\frac{1}{C_q},\quad
 \beta_q =\frac12\left(\frac{1}{q^{b}} -\frac{1}{(1+q)^{b}}\right),\qquad q=1,2,\dots,
 \end{equation*}
 where the sequence $(C_q)_{q=1}^\infty$ is defined  in the forerunners  of  Lemma~\ref{en2}. Recall that 
 $C_q\to \infty$ because $X$ is non-locally convex.

  Notice that
\[
\sum_{q=1}^{\infty} \beta_q
=\frac12\left( 1-\lim_q \frac{1}{(1+q)^{b}}\right) =\frac12,
 \]
so that we can  construct   maps $f_{\mathbf{a}, \mathbf{b}}$
 and $\Ave[f_{\mathbf{a}, \mathbf{b}}]$ as in Section~\ref{tayloring}.
 
 Since $A_q\to 0$, $f_{\mathbf{a}, \mathbf{b}}$ is  continuous on $I$.
 That  $f_{\mathbf{a}, \mathbf{b}}$ is Riemann-integrable on $I$ follows from Lemma~\ref{en2} (ii). Indeed, taking into account that $\beta_q\approx q^{-b-1}$,
\[
\sum_{q=1}^{\infty}  q^{1-p} \beta_q^p \approx  \sum_{q=1}^{\infty}  q^{1-p}  q^{-(b+1)p}
=\sum_{q=1}^{\infty}   \frac{1}{q^{bp+2p-1}},
\]
 and this last series  converges because  $bp+2p-1> 2(1-p)+2p-1=1$.

Now, we can use  formula (\ref{Aveformula}) to  extend $\Ave[f_{\mathbf{a}, \mathbf{b}}]$ to a function $F_{\mathbf{a}, \mathbf{b}}$ on the whole square  $I^2$.  

Notice that
 \[
 \frac{A_q C_q\beta_q}{ \displaystyle \sum_{r=q}^\infty \beta_r }=
 \frac{ q^{-b}-(1+q)^{-b}   }{(q+1)^{-b}}=\left(\frac{q+1}{q}\right)^b-1\to 0.
 \]
 By Lemma~\ref{en2} (iv), $\Ave[f_{\mathbf{a}, \mathbf{b}}]$  has a separately continuous extension to $I^2$ that maps the point $(1,1)$ to $0$. This extension must be $F_{\mathbf{a}, \mathbf{b}}$.

That $F_{\mathbf{a}, \mathbf{b}}$ is bounded and discontinuous on $I^2$ follows from  Lemma~\ref{en2} (iii) and (v), respectively, since $A_q C_q=1$.
\end{proof}

\begin{proof}[Proof of Theorem~\ref{theoremboundednotseparately}]  Let $\mathbf{a}=(A_q)_{q=1}^\infty$ and $\mathbf{b}=(\beta_q)_{q=1}^\infty$ be given  by
 \begin{equation*}
 A_q=\frac{1}{C_q},\quad
 \beta_q =2^{-q-1},\qquad q=1,2,\dots
 \end{equation*}
 Notice that
$\sum_{q=1}^{\infty} \beta_q =1/2$ so that we can construct the corresponding  maps $f_{\mathbf{a}, \mathbf{b}}$
 and $\Ave[f_{\mathbf{a}, \mathbf{b}}]$.
  
 Since $A_q\to 0$,   $f_{\mathbf{a}, \mathbf{b}}$
 is  continuous on $I$. Moreover,
 \[
\sum_{q=1}^{\infty}  q^{1-p} \beta_q^p =  2^{-p}  \sum_{q=1}^{\infty}  q^{1-p} \left(2^{-p}\right)^q<\infty,
\]
 and so  $f_{\mathbf{a}, \mathbf{b}}$
 is Riemann-integrable on $I$.  Now, we can use (\ref{Aveformula}) to define a map 
 $F_{\mathbf{a}, \mathbf{b}}$ that extends $\Ave[f_{\mathbf{a}, \mathbf{b}}]$ to $I^2$. 
 
 To prove that $F_{\mathbf{a}, \mathbf{b}}$ is bounded it suffices to see that $\Ave[f_{\mathbf{a}, \mathbf{b}}]$ is bounded.
But this follows from  Lemma~\ref{en2} (iii) since $A_q C_q=1$.

We get easily
 \[
 \frac{\displaystyle A_q C_q\beta_q}{\displaystyle \sum_{r=q}^\infty \beta_r }=
 \frac{ \displaystyle 2^{-q-1}  }{ 2^{-q} }=\frac{1}{2}.
 \]
By Lemma~\ref{en2} (iv), $ \Ave[f_{\mathbf{a}, \mathbf{b}}] $ does not have  a separately continuous extension to $I^2$. Hence $F_{\mathbf{a}, \mathbf{b}}$ is not separately continuous.
\end{proof}

\begin{proof}[Proof of Theorem~\ref{theoremnotboundednotseparately}]  Let $\mathbf{a}=(A_q)_{q=1}^\infty$ and $\mathbf{b}=(\beta_q)_{q=1}^\infty$ be given  by
 \begin{equation*}
 A_q=\frac{1}{\sqrt{C_q}},\quad
 \beta_q =2^{-q-1},\qquad q=1,2,\dots,
 \end{equation*}
and construct  the maps $f_{\mathbf{a}, \mathbf{b}}$
 and  $\Ave[f_{\mathbf{a}, \mathbf{b}}]$. As in the proof of Theorem ~\ref{theoremboundednotseparately},  the function $f_{\mathbf{a}, \mathbf{b}}$ is continuous and
  Riemann-integrable on $I$, so we are able to define  $F_{\mathbf{a}, \mathbf{b}}$, which extends $\Ave[f_{\mathbf{a}, \mathbf{b}}]$ to $I^{2}$. 
 
 In order to prove that $F_{\mathbf{a}, \mathbf{b}}$ is not bounded it suffices to see that 
 $\Ave[f_{\mathbf{a}, \mathbf{b}}]$ is not bounded. This follows from  Lemma~\ref{en2} (iii) since $A_q C_q=\sqrt{C_q} \to \infty$.
 
 Finally, since
 \[
 \frac{A_q C_q\beta_q}{\displaystyle \sum_{r=q}^\infty \beta_r }=
 \frac{  2^{-q-1}  \sqrt{C_q} }{ 2^{-q} }=\frac{\sqrt{C_q}}{2} \to \infty,
 \]
 Lemma~\ref{en2} (iv) yields that $\Ave[f_{\mathbf{a}, \mathbf{b}}]$   does not have  a separately continuous extension to $I^{2}$. Hence $F_{\mathbf{a}, \mathbf{b}}$ is not separately continuous.
\end{proof}

\begin{rem} Note that both Theorem~\ref{theoremboundednotseparately} and Theorem~\ref{theoremnotboundednotseparately} yield that given a non-locally convex quasi-Banach space $X$, there exists a continuous function $f:I\to X$ whose integral function $t\mapsto \int_{0}^{t}f(u)\,du$ fails to have a left derivative at $1$. That is, the fundamental theorem of calculus breaks down, not only for $\ell_p$ when $p<1$ as was showed by Popov in the aforementioned \cite{Popov1994}*{Theorem 2.1}, but for {\it any} non-locally convex quasi-Banach space!

\end{rem}

\begin{rem} The alert reader might wonder whether it is possible to define an integral for quasi-Banach spaces that interacts well with differentiation, in the sense that the fundamental theorem of calculus remains true.  Vogt introduced in 1967 a concept of integrability quite different from that of Riemann specifically designed for $p$-Banach spaces with $p<1$. We refer to \cite{Vogt1967} for details. As it happens, it has been recently shown in \cite{AlbiacAnsorenaJFA} that the Lebesgue differentiation theorem does hold for functions mapping in quasi-Banach spaces that are integrable in the sense of Vogt.

\end{rem}

\begin{bibsection}
\begin{biblist}

\bib{AlbiacAnsorenaNA}{article}{
   author={Albiac, F.},
   author={Ansorena, J.L.},
   title={Lipschitz maps and primitives for continuous functions in
   quasi-Banach spaces},
   journal={Nonlinear Anal.},
   volume={75},
   date={2012},
   number={16},
   pages={6108--6119},
}

\bib{AlbiacAnsorenaStudia}{article}{
  author={Albiac, F.},
  author={Ansorena, J.L.},
  title={On a problem posed by M. M. Popov},
  journal={Studia Math.},
    volume={211},
   date={2012},
   number={3},
   pages={247-258},
}

\bib{AlbiacAnsorenaJFA}{article}{
   author={Albiac, F.},
   author={Ansorena, J.L.},
   title={Integration in quasi-Banach spaces and the fundamental theorem of
   calculus},
   journal={J. Funct. Anal.},
   volume={264},
   date={2013},
   number={9},
   pages={2059--2076},
}

\bib{KPR1985}{book}{
  author={Kalton, N. J.},
  author={Peck, N. T.},
  author={Rogers, J. W.},
  title={An $\mathsf F$-space sampler},
  publisher={London Math. Lecture Notes 89, Cambridge Univ. Press},
  place={Cambridge},
  date={1985},
}

\bib{MazurOrlicz1948}{article}{
  author={Mazur, S.},
  author={Orlicz, W.},
  title={Sur les espaces m\'etriques lin\'eaires I},
  language={French},
  journal={Studia Math.},
  volume={10},
  date={1948},
  pages={184--208},
}

\bib{Popov1994}{article}{
  author={Popov, M. M.},
  title={On integrability in $\mathsf F$-spaces},
  journal={Studia Math.},
  volume={110},
  date={1994},
  number={3},
  pages={205--220},
}

\bib{Rolewicz1985}{book}{
  author={Rolewicz, S.},
  title={Metric linear spaces},
  series={Mathematics and its Applications (East European Series)},
  volume={20},
  edition={2},
  publisher={D. Reidel Publishing Co.},
  place={Dordrecht},
  date={1985},
}

\bib{Vogt1967}{article}{
  author={Vogt, D.},
  title={Integrationstheorie in $p$-normierten R\"aumen},
  language={German},
  journal={Math. Ann.},
  volume={173},
  date={1967},
  pages={219--232},
}

\end{biblist}
\end{bibsection}

\end{document}